\documentclass[letterpaper, 10 pt, conference]{ieeeconf}  

\IEEEoverridecommandlockouts                              
\usepackage{graphics} 
\usepackage{epsfig} 
\usepackage{mathptmx} 
\usepackage{times} 

\usepackage{amsmath,amssymb,amsthm,mathtools,empheq,dsfont}
\usepackage{xcolor}

\def\balpha{\boldsymbol{\alpha}}

\newcommand\bq{\boldsymbol{q}}
\newcommand\br{\boldsymbol{r}}
\newcommand\bxbar{\bar{\boldsymbol{x}}}

\usepackage{algorithmic}
\usepackage[linesnumbered,ruled,vlined]{algorithm2e}

\newcommand\EE{\mathbb E}

\newcommand\RR{\mathbb R}

\DeclareMathOperator*{\argmin}{arg\,min}
\def\E{\mathbb{E}}
\def\R{\mathbb{R}}

\theoremstyle{plain}

\newtheorem{proposition}{Proposition}

\newtheorem{definition}{Definition}
\newtheorem{theorem}{Theorem}

\theoremstyle{remark}
\newtheorem{remark}{Remark}[section]
\def\mff{m}          
\def\raf{\mu}        

\newcommand{\revision}[1]{{\color{blue}#1}}

\title{\LARGE \bf
Inverse Learning of the Altruism and Cost Level in\\ Mixed-Individual Mean Field Games
}

\author{Haoyang Cao, Gökçe Dayanıklı, and Xiaofei Shi
\thanks{H. Cao is with Department of Applied Mathematics and Statistics,
        Johns Hopkins University
        {\tt\small hycao@jhu.edu}}%
\thanks{G. Dayanıklı is with Department of Statistics,
        University of Illinois, Urbana-Champaign,
        {\tt\small gokced@illinois.edu}}%
\thanks{X. Shi is with Department of Statistical Sciences, University of Toronto,
        {\tt\small xf.shi@utoronto.ca}}%
}

\begin{document}

\maketitle
\thispagestyle{empty}
\pagestyle{empty}

\begin{abstract}

Understanding how humans respond to incentives, both at the individual and collective levels, is crucial to the design of effective policies. Within the continuous-time stochastic framework for large interacting populations, mean field games (MFGs) model populations of non-cooperative agents, whereas mean field control (MFC) describes the fully cooperative benchmark, interpreted in our setting as fully altruistic behavior. 
Mixed-individual MFGs interpolate between these two extremes through a parameter governing the degree of altruism. 
A central challenge for regulators and policymakers, however, is that intrinsic altruism levels and other private structural parameters, such as individual labor costs, are typically unobservable. 
To address this challenge, we develop an inverse learning framework for mixed-individual MFGs. Our approach enables the recovery of (latent) altruism and labor cost levels from noisy observations, with experiments demonstrating the feasibility and accuracy of our method. These findings underscore the promise of inverse MFG methodologies for uncovering latent preference structures in large populations, with important implications for incentive design, empirical behavioral modeling, and data-driven policy analysis.

\end{abstract}

\section{Introduction}
Mean field control (MFC)~\cite{bensoussan2013mean,carmona2013control} and mean field games (MFGs)~\cite{huang2006nash,lasryJeuxChampMoyen2006,lasryMeanFieldGames2007} have increasingly become a tool for modeling and analyzing large populations of interacting agents in different applications such as modeling of energy markets~\cite{aid2020entry,carmona2022mean,djehiche2020price}, financial systems~\cite{carmonafouquesun2015mean,lachapelle2016efficiency}, or traffic management~\cite{chevalier2015micro,huang2021dynamic}. These models approximate solutions to finite-agent systems by considering the limit in which the number of agents goes to infinity, focusing on the interactions between a representative agent and the population distribution of the states or controls of the agents. 
Although related, the two frameworks differ in their underlying strategic settings: MFC models cooperative agents and approximates the social optimum, whereas MFG models non-cooperative agents and approximates a Nash equilibrium. The distinction in their solutions is a consequence of how individuals perceive their influence on the mean field term. In MFC, each agent endogenizes their impact on the mean field and optimizes accordingly. In contrast, in MFG, each agent treats the mean field as exogenous, optimizes her best response to it, and the equilibrium is achieved by a fixed point argument.

However, in many real life applications especially the ones including the modeling of the human behavior, agents may display a mixture of non-cooperative and cooperative behaviors. Following this motivation mixed individual MFGs (MI-MFGs) have been introduced~\cite{totc_cdc,dayanikli2025cooperation}. In this model, each agent sees their effect on the mean field term depending on an altruism parameter. Varying this altruism parameter allows the model to smoothly transition from fully non-cooperative behavior to fully cooperative control. This modeling flexibility enables a more realistic representation of agent behavior in socio-economic and engineering systems.

From a mechanism design perspective, improving the societal outcomes requires understanding the underlying preferences and objectives of interacting individual agents. In practice, model parameters or their level of altruism are typically unknown and cannot be directly observed, but they can be inferred from observed equilibrium behavior. While MI-MFGs provide a flexible framework to model the combination of cooperative and non-cooperative behavior, existing works have mainly focused on the equilibrium analysis for a known model and in contrast learning of characteristics in a data-driven way remains unexplored. In this paper, we address this gap by studying the inverse learning of both the cost parameter and the altruism parameter in the MI-MFG. One of the motivations for proposing a data-driven learning method is the fact that they can be easily generalized to the higher dimensions.

Related literature on inverse learning in mean-field models is still sparse and is considerably more developed for MFGs than for MFC. For MFGs, \cite{yang2018learning} studied a class of discrete-time models in which the inverse problem can be reduced to an induced Markov decision process and addressed using deep inverse reinforcement learning. \cite{chen2022individual} later showed that this reduction is essentially limited to cooperative settings and proposed an individual-level inverse-learning formulation that directly captures mean-field interactions; see also \cite{chen2023adversarial} for an adversarial extension designed to improve robustness to imperfect demonstrations. More recently, \cite{anahtarci2025maximum,anahtarci2025kernel} developed maximum-causal-entropy and kernel-based formulations for stationary MFGs, providing more structured optimization frameworks and richer reward classes. By contrast, inverse problems for MFC have received much less attention and have mostly appeared in the forms of inverse optimal control and data-driven identification; representative examples include the inverse traffic-type MFC model in \cite{kachroo2016inverse} and the data-driven inverse approach for potential mean-field models in \cite{zhang2025surrogate}.

Our contributions are three-fold. The first contribution is related to the analysis of MI-MFGs. Different from~\cite{totc_cdc} and~\cite{dayanikli2025cooperation} that use forward-backward stochastic differential equations, we prove characterization of mixed individual mean-field Nash equilibrium (MI-MFNE) via Hamilton-Jacobi-Bellman (HJB) and Kolmogorov-Fokker-Planck (KFP) forward-backward partial differential equations (FBPDEs). We also reduce the FBPDE system to a system of coupled forward-backward ordinary differential equations (FBODEs) and give the existence and unique results for the MI-MFNE. Our second contribution is to introduce the \textit{first inverse learning method} for the MI-MFGs with the motivation of understanding the altruism levels and cost parameters of individuals which may not be known explicitly from the observed behaviors.  Finally, we validate the theoretical inverse learning findings with experiments. In these experiments, we demonstrate how altruism influences equilibrium behavior.

The paper is structured as follows. Section~\ref{sec:model} introduces the mathematical setting and related equilibrium definition for an MI-MFG model of our interest and formalize our problem statement. We emphasize the perspectives of both the regulator and individual agents in this game setting. Section~\ref{sec:method} presents the theoretical analysis of the MI-MFG model and the inverse learning method. Section~\ref{sec:numerics} introduces the inverse learning algorithm to learn the altruism levels and cost parameters and presents the experiment results that validate the theoretical findings. Finally in Section~\ref{sec:conclusion}, we summarize our findings to conclude and give our future directions.

\section{Model}
\label{sec:model}
\subsection{Mixed Individual Mean-Field Game}\label{subsec:II-A}
Inspired by~\cite{dayanikli2025cooperation}, we introduce the formulation of a mixed individual mean-field game (MI-MFG) over a finite time horizon \([0,T]\) with fixed \(T>0\). Given a filtered probability space \((\Omega,\mathcal F,\mathbb P,\mathbb F=(\mathcal F_t)_{t\geq0})\) satisfying the usual conditions that supports a one-dimensional standard Brownian motion \(W=(W_t)_{t\geq0}\), we let \(\xi\in\mathcal F_0\) such that \(\xi\sim \mu_0\) and \(\mathbb E[\xi^2]<\infty\). For a representative agent \(i\), with $b_\alpha$ and $b_x$ being constant parameters and $\sigma>0$ as the constant volatility, her state process evolves according to
\small
\begin{align}\label{eq:dyn}
dX_t^\alpha = \left(b_\alpha \alpha_t + b_x X_t \right) dt + \sigma dW_t,  \qquad X_0 \sim \xi, 
\end{align}
\normalsize
under a control \(\alpha=(\alpha_t)_{t\in[0,T]}\in\mathcal A\). Here, \(\mathcal A\) denotes the set of admissible controls such that
\small
\begin{equation}
    \label{eq:adm-ctrl-1}
    \begin{aligned}
    \mathcal A\coloneqq\biggl\{&(\alpha_t)_{t\in[0,T]}\biggl|\exists[0,T]\times\R\overset{\alpha}{\mapsto}\R\text{ s.t.  }\alpha_t=\alpha(t,X^\alpha_t)\\&\text{and \eqref{eq:dyn} has a unique strong solution with}\\&
    \mathbb E\bigl[\sup_{t\in[0,T]}(X^\alpha_t)^2\bigl]<\infty\biggl\}.
    \end{aligned}
\end{equation}
\normalsize
Given a flow of probability distributions \(\mff=(\mff_t)_{t\in[0,T]}\), Agent \(i\) faces the following expected total cost 
\small
\begin{equation}\label{eq:agent-cost}J(\alpha;\mff) = \frac{1}{2}\E\left[\int_0^T c_\alpha \alpha_t^2 + c_x \left(X_t^\alpha\right)^2 + c_\mu f(\raf_t^\alpha, \mff_t;\lambda) \ dt \right],\end{equation}
\normalsize
subject to \eqref{eq:dyn}, where $c_\alpha, c_x, c_\mu>0$ are constant coefficients. Here, the cost of interaction \(f\) is given by
\small
\begin{equation}\label{def: mean penalty}
f(\raf^{\alpha}, \mff;\lambda)
= \left(\lambda\E_{Z\sim \mff}[Z] + (1-\lambda)\E_{Z\sim \raf^{\alpha}}[Z] \right)^2,
\end{equation}
\normalsize
with \(\mu^\alpha=(\mu^\alpha_t)_{[0,T]}=({\rm Law}(X^\alpha_t))\) and \(\lambda\in[0,1]\). 
Our model is presented under the one-dimensional state, control, and noise case for the sake of simplicity in presentation and notation. The technical analysis and learning approach can be extended to the higher dimensional setting in a straightforward manner.
Define the following notion of equilibrium:
\begin{definition}\label{def:mf nash equilibrium}
A pair $(\hat \alpha, \hat\mff)$ is said to be a mixed individual mean-field Nash equilibrium (MI-MFNE) provided that
\begin{enumerate}
\item Given the distribution flow $\hat \mff$, $\hat \alpha \in \argmin_{\alpha\in\mathcal{A}} J(\alpha; \hat\mff)$ subject to \eqref{eq:dyn}, and in the meantime,
\item $\hat \mff_t = \raf^{\hat\alpha}_t =\mathcal{L}(X_t^{\hat\alpha})$ for all $t\in[0,T]$. 
\end{enumerate}
\end{definition}

From the above definition, we can see that the agents in this MI-MFG are interacting with each other through the cost structure in the minimization problem. In particular, the presence of the cost of interaction \eqref{def: mean penalty} encompasses an intermediate status between a non-cooperative game (i.e., an MFG) and a fully cooperative setting (i.e., an MFC). In this way, intuitively, the altruism parameter \(\lambda\) reflects the level of altruism of the agent: when \(\lambda\) is close to \(1\), an agent is more of a non-cooperative player in the game, competing with her opponents via the mean field \(\mff\) in the cost; when \(\lambda\) is close to \(0\), on the other hand, the agent is more collaborative with the rest of the population through \(\mu^\alpha\). We note that, strictly speaking, $(1-\lambda)\in[0,1]$ represents the \textit{altruism level}; for simplification, we will call $\lambda$ the altruism parameter.

\subsection{Regulator versus Individual Agents}
As suggested by~\cite{dayanikli2025cooperation}, a general MI-MFG can be used to study the phenomenon of ``tragedy of the commons'', such as overfishing, deforestation, traffic congestion, and so on in a more realistic manner. In these models, agents' state processes can be interpreted as individual wealth from exploiting common goods. The mean information models exhaustion of common goods. The individual overall cost functions usually take into account both the cost of labor and the regulatory cost to prevent the tragedy of the commons; agents' level of altruism will then impact the effectiveness of such regulatory costs.

Back to the specific game setting introduced in Section~\ref{subsec:II-A}, we now put it into the perspectives of regulator and individual agents, respectively. We assume that the state process \eqref{eq:dyn} is public to both the regulator and the agents, that is, \(b_x,b_\alpha\in\R\) are given. In the total cost \eqref{eq:agent-cost}, the individual agents determine the coefficient in the quadratic labor cost \(c_\alpha>0\) as well as the level of altruism via the altruism parameter \(\lambda\in[0,1]\); the regulator determines the \(c_x,c_\mu\in(0,+\infty)\) in the regulatory cost. From the regulator's perspective, it is crucial to understand how agents respond to the regulatory cost when the altruism parameter \(\lambda\) and labor cost structure \(c_\alpha\) are private to the agent. For the rest of this paper, we study how to infer \(\lambda\), or jointly infer \((\lambda,c_\alpha)\) by observing agents' equilibrium behavior under a given game setting.  

\section{Method and Results}
\label{sec:method}
\subsection{Analysis of the Mixed Individual MFG}
\label{subsec:mi-mfg}
We first characterize the equilibrium strategy for the representative agent \(i\) under a given game structure. The characterization is given by a coupled forward-backward partial differential equations (FBPDE).

\begin{theorem}[FBPDE characterization of MI-MFNE]
\label{the:FBPDE}
A control $\hat{\balpha}$ is an MI-MFNE control profile if and only if:
\small
\begin{equation}\label{eq:mi-BR-formula}
\hat \alpha_t = -\frac{b_\alpha \partial_x u(t,x)}{c_\alpha},
\end{equation}
\normalsize
where $(u, m)=(u_t, m_t)_{t\in [0,T]}$ solves the following forward-backward partial differential equation (FBPDE) system of Kolmogorov-Fokker-Planck (KFP) and Hamilton-Jacobi-Bellman (HJB) equations:
\small
\begin{equation}\label{eq:MI_FBSDE}
\left\{
\begin{aligned}
-\partial_t u_t &= \tfrac{1}{2} \sigma^2 \partial^2_{xx} u(t,x)  -\tfrac{b_{\alpha}^2}{2c_\alpha} \big(\partial_x u(t,x)\big)^2 + b_x x \partial_x u(t,x) \\
&\hskip5mm +  \tfrac{c_x}{2} x^2+ \tfrac{c_\mu}{2} \bar x_t^2+ c_\mu (1-\lambda) \bar x_t x ,
\\[1mm]
\partial_t m_t &=\tfrac{\sigma^2}{2}\partial^2_{xx}m(t,x) - \partial_x\big(m(t,x)\big(-\tfrac{b_\alpha^2}{c_\alpha}\partial_x u(t,x)+b_x x\big) \big) ,\\[1mm]
\qquad u_T &= 0,\qquad m(0,x) = \mu_0(x),
\end{aligned}
\right.
\end{equation}
\normalsize
where $\bar{x}_t=\int x m(t,x) dx$ is a function of time.
\end{theorem}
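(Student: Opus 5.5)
The plan is to reduce the agent's best-response problem, for a fixed flow $\hat\mff$, to a standard linear-quadratic stochastic control problem by linearizing the only nonstandard term, the dependence of the cost on the agent's own mean $y_t^\alpha:=\E[X_t^\alpha]$. Then I would read off the HJB equation by a verification argument and close the fixed point with the KFP equation.

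\textbf{Step 1: linearize the own-mean term.} Fix a flow $\hat\mff$ and write $\bar x_t=\E_{Z\sim\hat\mff_t}[Z]$. The interaction cost is $G_t(y):=\tfrac{c_\mu}{2}\bigl(\lambda\bar x_t+(1-\lambda)y\bigr)^2$ evaluated at $y=y^\alpha_t$. This is a convex quadratic in $y$. So for any candidate $\hat\alpha$ with mean $\hat y_t$, convexity gives
\[
G_t(y^\alpha_t)\ \ge\ G_t(\hat y_t)+G_t'(\hat y_t)\,\E\bigl[X^\alpha_t-X^{\hat\alpha}_t\bigr].
\]
Equality holds to first order, and the remainder $\tfrac{c_\mu}{2}(1-\lambda)^2(y^\alpha_t-\hat y_t)^2$ is nonnegative. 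Hence
\[
J(\alpha;\hat\mff)-J(\hat\alpha;\hat\mff)\ \ge\ \tilde J(\alpha)-\tilde J(\hat\alpha),
\]
where $\tilde J$ is a classical control cost with running cost $\tfrac12 c_\alpha\alpha^2+\tfrac12 c_x x^2+G_t'(\hat y_t)\,x$. At a consistent point $\hat y_t=\bar x_t$ one has $G_t'(\bar x_t)=c_\mu(1-\lambda)\bar x_t$, since $\lambda+(1-\lambda)=1$. This produces exactly the term $c_\mu(1-\lambda)\bar x_t x$ in the HJB equation of \eqref{eq:MI_FBSDE}. The term $\tfrac{c_\mu}{2}\bar x_t^2$ is a state-independent additive constant; it normalizes $u$ and does not affect the control.

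\textbf{Step 2: sufficiency.} Suppose $(u,m)$ solves \eqref{eq:MI_FBSDE}. The HJB equation is that of $\tilde J$, with Hamiltonian minimized by $a=-b_\alpha\partial_x u/c_\alpha$. I would use the linear-quadratic ansatz $u(t,x)=\tfrac12 p_t x^2+q_t x+r_t$, with $p$ solving a Riccati equation (well-posed since $c_\alpha,c_x>0$) and $q,r$ solving linear ODEs. With this ansatz the feedback \eqref{eq:mi-BR-formula} is affine and Lipschitz in $x$, so it lies in $\mathcal A$. A standard It\^o verification argument then shows that $\hat\alpha$ minimizes $\tilde J$ over $\mathcal A$. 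The KFP equation says $m_t={\rm Law}(X^{\hat\alpha}_t)$, so $\hat y_t=\bar x_t$. The inequality of Step 1 then gives $J(\alpha;\hat\mff)\ge J(\hat\alpha;\hat\mff)$ for all $\alpha\in\mathcal A$, and consistency (condition 2 of Definition~\ref{def:mf nash equilibrium}) holds with $\hat\mff=m$.

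\textbf{Step 3: necessity, the main obstacle.} Suppose $(\hat\alpha,\hat\mff)$ is an MI-MFNE. I need to show that $\hat\alpha$ is the feedback \eqref{eq:mi-BR-formula} built from a solution of \eqref{eq:MI_FBSDE}. The difficulty is that $\mathcal A$ consists of Markov feedbacks, and this set is not convex in the state, so one cannot simply perturb $\hat\alpha$ and differentiate. My approach is as follows.

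\begin{enumerate}
\item Let $u$ solve the HJB equation with source $c_\mu(1-\lambda)\bar x_t x$, where $\bar x$ is the mean of $\hat\mff=\mu^{\hat\alpha}$. Let $\alpha^*$ be its feedback.
\item By Step 2, $\alpha^*$ is optimal for $J(\cdot;\hat\mff)$, so $J(\alpha^*;\hat\mff)\le J(\hat\alpha;\hat\mff)$. Optimality of $\hat\alpha$ gives the reverse inequality, so the two costs are equal.
\item Equality forces equality in the convexity inequality of Step 1. In particular $\tilde J(\hat\alpha)=\tilde J(\alpha^*)$, and the quadratic remainder vanishes.
\item $\tilde J$ is strictly convex in the control along the linear dynamics, with $c_\alpha>0$, and its optimal control is unique, for instance by the completion-of-squares identity
\[
\tilde J(\alpha)-\tilde J(\alpha^*)=\tfrac{c_\alpha}{2}\E\int_0^T(\alpha_t-\alpha^*_t)^2\,dt.
\]
Hence $\hat\alpha=\alpha^*$ $dt\otimes d\mathbb P$-a.e.
\end{enumerate}

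Setting $m_t={\rm Law}(X^{\hat\alpha}_t)$ then yields the KFP equation, which completes the characterization. I expect the completion-of-squares identity under the admissibility condition $\E\sup_t (X_t^\alpha)^2<\infty$ to be the delicate technical point. It is needed to justify the local martingale terms in the It\^o expansion of $u(t,X^\alpha_t)$, using the quadratic growth of $u$.
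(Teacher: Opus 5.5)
Your sufficiency argument (Steps 1--2) is sound, up to one routine point. In that direction $u$ is given to you, so its quadratic form must come from uniqueness of the HJB solution in a quadratic-growth class, not from an ansatz. It also takes a genuinely different route from the paper. The paper invokes the Bensoussan--Frehse--Yam variational mean-field-control machinery: a Hamiltonian depending on the agent's own law $\mu$, a $\partial_\mu \hat H$ term in the HJB, and consistency $\hat m=\mu$ imposed at the end. You instead linearize the convex own-mean cost and reduce to a classical LQ verification, which is more elementary and shows where $c_\mu(1-\lambda)\bar x_t x$ comes from.

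The necessity direction has a gap at item 2 of Step 3. Step 2 certifies optimality only for a feedback whose \emph{own} mean equals the $\bar x$ in its HJB, because only then is the source $G_t'$ evaluated at the candidate's own mean. In Step 3, $u$ is built from $\bar x_t=\E[X^{\hat\alpha}_t]$, and nothing you have shown gives $\E[X^{\alpha^*}_t]=\bar x_t$. So $(u,{\rm Law}(X^{\alpha^*}))$ need not solve the coupled system, and Step 2 does not apply. Proving $\E[X^{\alpha^*}_t]=\bar x_t$ is essentially the whole content of necessity, so item 2 is circular. The Step 1 inequality linearized at $\hat\alpha$ does not help either, because it points the wrong way. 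You have the exact identity $J(\alpha^*;\hat\mff)-J(\hat\alpha;\hat\mff)=\tilde J(\alpha^*)-\tilde J(\hat\alpha)+\tfrac{c_\mu}{2}(1-\lambda)^2\int_0^T(\E[X^{\alpha^*}_t]-\bar x_t)^2dt$. Optimality of $\hat\alpha$ plus completion of squares then only yields $\tfrac{c_\alpha}{2}\E\int_0^T(\hat\alpha_t-\alpha^*(t,X^{\hat\alpha}_t))^2dt\le\tfrac{c_\mu}{2}(1-\lambda)^2\int_0^T(\E[X^{\alpha^*}_t]-\bar x_t)^2dt$. When $\lambda<1$ this forces neither side to vanish.

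What is missing is a first-order condition: a minimizer of the convex functional $J(\cdot;\hat\mff)$ over $\mathcal A$ must also minimize its linearization $\tilde J$. This is exactly what the non-convexity of $\mathcal A$ you flagged blocks, and your argument does not get around it. The paper sidesteps this by keeping the own law $\mu$, with its own KFP equation, as a separate variable while $\hat m$ is frozen. Within your framework there are two clean repairs.

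(a) For the frozen flow, build the self-consistent candidate $\alpha^\sharp$ from the HJB with source $G_t'(y^\sharp_t)x$, where $y^\sharp$ is the mean of its own state. With the quadratic ansatz, $p=2ry^\sharp+q$ is the adjoint of the deterministic strictly convex problem $\min\int_0^T[\tfrac{c_\alpha}{2}a_t^2+\tfrac{c_x}{2}y_t^2+G_t(y_t)]dt$ with $\dot y=b_\alpha a+b_x y$, $y_0=\bar\mu_0$, so a solution exists for every $T$. Steps 1--2, linearized at $y^\sharp$, show $\alpha^\sharp$ is optimal. Your items 3--4 then give $\hat\alpha=\alpha^\sharp$, hence $y^\sharp=\bar x$, and the source collapses to $c_\mu(1-\lambda)\bar x_t x$.

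(b) Alternatively, use mean shifts. For deterministic $c\in L^2(0,T)$ and $\dot\delta=b_\alpha c+b_x\delta$, $\delta_0=0$, the feedback $\hat\alpha(t,x-\delta_t)+c_t$ lies in $\mathcal A$. It shifts $(\hat\alpha_t,X^{\hat\alpha}_t)$ by $(c_t,\delta_t)$ and leaves the fluctuations, hence the variance part of $J$, unchanged. So $(\E[\hat\alpha_t],\bar x_t)$ minimizes the deterministic problem above. Its first-order condition coincides with that of the linearized problem, since $G_t'(\bar x_t)=c_\mu(1-\lambda)\bar x_t$, and strict convexity gives $\E[X^{\alpha^*}_t]=\bar x_t$. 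After that, Step 2 does apply to $\alpha^*$.

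Finally, in your completion-of-squares identity the integrand must be $(\alpha_t-\alpha^*(t,X^\alpha_t))^2$, with the feedback evaluated along $X^\alpha$. Pathwise uniqueness for the $\alpha^*$-SDE then turns $\hat\alpha_t=\alpha^*(t,X^{\hat\alpha}_t)$ into $X^{\hat\alpha}=X^{\alpha^*}$.
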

\begin{proof}
The first step of the proof is to realize that given the population distribution $\hat m$, the problem becomes a mean field control problem. Following the proof in~\cite{bensoussan2013mean} and by introducing an exogenous population distribution $\hat m$, we first write the Hamiltonian:
\small
\begin{equation*}
\begin{aligned}
H(x,\alpha,\hat m, \mu,y;\lambda) :=& 
\tfrac{c_\mu}{2} \left(\lambda \int \eta \hat m(\eta) d\eta + (1-\lambda) \int \xi \mu(\xi) d\xi \right)^2\\
&+ \left(b_{\alpha} \alpha + b_x x\right)y + \tfrac{c_\alpha}{2} \alpha^2+ \tfrac{c_x}{2} x^2.
\end{aligned}
\end{equation*}   
\normalsize
Due to the convexity, the Hamiltonian is minimized if and only if $\hat\alpha=-\tfrac{b_\alpha y}{c_\alpha}$. The minimized Hamiltonian is therefore:
\small
\begin{equation*}
\begin{aligned}
\hat{H}(x,\hat m, \mu,y;\lambda) :=&  
\tfrac{c_\mu}{2} \left(\lambda \int \eta \hat m(\eta) d\eta + (1-\lambda)\int \xi \mu(\xi) d\xi\right)^2
\\
& -\tfrac{b_{\alpha}^2}{2c_\alpha} y^2 + b_x x y +  \tfrac{c_x}{2} x^2.
\end{aligned}
\end{equation*}
\normalsize
Then the HJB is given by:
\small
\begin{equation*}
\begin{aligned}
-\partial_t u(t,x)  =&\tfrac{1}{2} \sigma^2 \partial^2_{xx} u(t,x)+ \hat{H}(x,\hat m, \mu,\partial_x u(t,x);\lambda) \\
&\hskip5mm+ \int_\RR \dfrac{\partial \hat H}{\partial \mu} (\xi, \hat m, \mu, \partial_x u(t,\xi);\lambda)(x)\mu(\xi) d\xi 
\end{aligned}
\end{equation*}
\normalsize
where we have:
\small
\begin{equation*}
\begin{aligned}
&\dfrac{\partial \hat H}{\partial \mu} (\xi, \hat m, \mu, \partial_x u(t, \xi);\lambda)(x)  \\
&\hskip3mm= c_\mu \Big( (1-\lambda) \lambda \big(\int \eta \hat m(\eta) d\eta\big) x + (1-\lambda)^2  \big(\int \eta \mu(\eta) d\eta \big)x\Big),
\end{aligned}
\end{equation*}
\normalsize
with $u(T,x)=0$ due to zero terminal cost. The coupled forward equation is written by using KFP equation and plugging in the minimizer of the Hamiltonian.
    
The second step is to ensure the consistency condition holds. In other words, at the equilibrium $\hat \mff_t=\raf_t^{\hat\alpha}=\mu_t$. By introducing $\int \eta \hat m d\eta = \int \eta \mu(\eta) d\eta=\bar x$, we can conclude the final system.
\end{proof}

\begin{remark}
It is worth emphasizing that the backward equation in \eqref{eq:MI_FBSDE} is not from dynamic programming principle but rather a variational approach introduced in \cite{bensoussan2013mean}. Therefore, the function \(u(t,x)\) does not refer to the game value in equilibrium. Note that in the definition of admissible control set \(\mathcal A\) in \eqref{eq:adm-ctrl-1}, the feedback control depends only on the state, not the augmented state-law space. Therefore, the \(u\) function corresponds to an adjoint state in the variational approach.  
\end{remark}

We can further specify the analytical solution of the equilibrium strategy.

\begin{proposition}[FBODE characterization of MI-MFNE]
\label{prop:mi_FBODE}
Let $(r_t, q_t, s_t, \bar{x}_t)$ denote the solution of the following forward-backward ordinary differential equation (FBODE) system:
\small
\begin{empheq}[left={\empheqlbrace}]{align}
-\dot{r}_t &= -\frac{2b_\alpha^2}{c_\alpha} r_t^2 + 2b_x r_t +\frac{c_x}{2}, \qquad && r_T = 0,\label{FBODE:riccati}\\[1mm]
-\dot{q}_t &= -\frac{2b_\alpha^2}{c_\alpha} r_t q_t + b_x q_t +c_\mu (1-\lambda) \bar{x}_t, \qquad && q_T = 0,\label{FBODE:q}\\[1mm]
-\dot{s}_t &= \sigma^2 r_t -\frac{b_\alpha^2}{2c_\alpha} q^2_t +\frac{c_\mu}{2}  (\bar{x}_t)^2, \qquad && s_T = 0,\label{FBODE:s}\\[1mm]
\dot{\bar{x}}_t&=  -\frac{b_\alpha^2}{c_\alpha} (2r_t \bar{x}_t +q_t) +b_x\bar{x}_t, &&\bar{x}_0 =\bar{\mu}_0\label{FBODE:xbar},
\end{empheq}
\normalsize
where $\bar{\mu}_0=\int_\RR \xi \mu_0(\xi)d\xi$. Then 
\small
\begin{equation}\label{eq:equilibrium-ctrl-minors-only}
\hat \alpha_t= -\frac{b_\alpha}{c_\alpha} (2r_t X_t + q_t) 
\end{equation}
\normalsize
is the MI-MFNE control. 
    
\end{proposition}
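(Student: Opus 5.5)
The plan is a quadratic ansatz in Theorem~\ref{the:FBPDE}. We look for a solution of the HJB equation in \eqref{eq:MI_FBSDE} of the form
\[
u(t,x) = r_t x^2 + q_t x + s_t ,
\]
with deterministic functions $r,q,s$. The mean $\bar x_t$ is, a priori, the first moment of the KFP solution $m$. This gives $\partial_x u = 2r_t x + q_t$ and $\partial^2_{xx}u = 2r_t$, and $-\partial_t u = -\dot r_t x^2 - \dot q_t x - \dot s_t$.

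First, substitute these into the HJB equation. Expanding $-\frac{b_\alpha^2}{2c_\alpha}(2r_tx+q_t)^2$ and $b_x x(2r_t x+q_t)$ produces a polynomial identity in $x$, and we match coefficients:
\begin{itemize}
\item The $x^2$ coefficients give $-\dot r_t = -\frac{2b_\alpha^2}{c_\alpha}r_t^2 + 2b_x r_t + \frac{c_x}{2}$.
\item The $x$ coefficients give $-\dot q_t = -\frac{2b_\alpha^2}{c_\alpha} r_t q_t + b_x q_t + c_\mu(1-\lambda)\bar x_t$.
\item The constant terms give \eqref{FBODE:s}, using $\frac12\sigma^2\cdot 2r_t=\sigma^2 r_t$.
\end{itemize}
The terminal condition $u_T=0$ for all $x$ forces $r_T=q_T=s_T=0$. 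Conversely, if $(r,q,s,\bar x)$ solves \eqref{FBODE:riccati}--\eqref{FBODE:xbar}, the quadratic $u$ solves the HJB equation with that $\bar x$.

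Second, handle the KFP equation. The drift $-\frac{b_\alpha^2}{c_\alpha}(2r_tx+q_t)+b_x x$ is affine and Lipschitz in $x$. Hence the SDE \eqref{eq:dyn} under the feedback \eqref{eq:equilibrium-ctrl-minors-only} has a unique strong solution with finite second moment, so the control is admissible, and its law $m_t$ solves the KFP equation. To identify the $\bar x_t$ appearing in the HJB with $\int x\,m(t,x)dx$, take expectations in the SDE, or equivalently multiply the KFP equation by $x$ and integrate by parts. This yields
\[
\dot{\bar x}_t = -\frac{b_\alpha^2}{c_\alpha}(2r_t\bar x_t+q_t)+b_x\bar x_t ,
\]
which is \eqref{FBODE:xbar}. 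The boundary terms vanish by the second moment bound, and the initial condition is $\bar x_0=\bar\mu_0$. Thus $(u,m)$ solves \eqref{eq:MI_FBSDE}, and by Theorem~\ref{the:FBPDE} the control $\hat\alpha_t=-b_\alpha\partial_x u/c_\alpha = -\frac{b_\alpha}{c_\alpha}(2r_tX_t+q_t)$ is an MI-MFNE.

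The main obstacle is making sure the FBODE actually has a solution to plug in. The statement presupposes one, but I would still check it.
\begin{itemize}
\item Equation \eqref{FBODE:riccati} is decoupled. It is a standard Riccati equation with $c_x/2>0$ and a negative quadratic coefficient, so it has a unique nonnegative bounded solution on $[0,T]$ by the usual comparison argument.
\item Given $r$, the pair $(q,\bar x)$ solves a linear forward-backward system. I would solve it with the decoupling ansatz $q_t=\eta_t\bar x_t$, which gives a second (nonsymmetric) Riccati equation for $\eta$. This is the delicate point: that Riccati equation may blow up unless $c_\mu(1-\lambda)$ is small relative to $T$, or unless some sign structure holds. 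Here $c_\mu(1-\lambda)\ge 0$ enters with a favorable sign, so I expect $\eta\ge0$ and no blow-up.
\item Finally, $s$ is obtained by direct integration.
\end{itemize}
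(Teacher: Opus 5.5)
Your proposal is correct and is essentially the paper's proof: the quadratic ansatz $u(t,x)=r_tx^2+q_tx+s_t$ in the HJB equation of Theorem~\ref{the:FBPDE}, matching the $x^2$, $x$ and constant coefficients to obtain the $r$, $q$, $s$ equations with zero terminal values, and taking the first moment of the KFP equation to obtain the equation for $\bar x_t$. Your additional steps are not needed for the proposition, which presupposes a solution, but they are sound. These are admissibility of the affine feedback, identifying $\bar x$ with the mean of the law via uniqueness for the linear mean ODE, and the existence check, which the paper handles separately in its small-time theorem. Indeed, in reversed time your decoupling Riccati equation for $\eta$ has the same favorable sign structure as the one for $r$ (quadratic coefficient $-b_\alpha^2/c_\alpha<0$, source $c_\mu(1-\lambda)\ge 0$), so your expectation that $\eta\ge 0$ with no blow-up is right.
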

\begin{proof}
Following the linear quadratic form of the model, we propose the following ansatz $u(t,x)= r_t x^2+ q_tx +s_t$ where $r_t, q_t, s_t$ are deterministic functions of time. Plugging in $\partial_t u(t,x) = \dot{r}_t x^2 +\dot{q}_t x + \dot{s}_t$, $\partial_x u(t,x) = 2r_t x +q_t$, and $\partial^2_{xx} u(t,x) = 2r_t$ in the backward differential equation and matching the terms including $x^2$, $x$, and without any $x$, we get the backward ODE system presented in~\eqref{FBODE:riccati}-\eqref{FBODE:s}. The forward ODE is found by plugging in the forward PDE below:
\small
\begin{equation*}
\begin{aligned}
  \dot{\bar{x}}_t &= \frac{d}{dt} \int x m(t,x) dx = \int x \partial_t m(t,x) dx,\\
  &= \int x \tfrac{\sigma^2}{2}\partial^2_{xx}m(t,x) - \nabla_x\big(m(t,x)\big(-\tfrac{b_\alpha^2}{c_\alpha}\partial_x u(t,x)+b_x x\big) \big)dx.
\end{aligned}
\end{equation*}
\normalsize
By plugging in $\partial_x u(t,x) = 2r_t x +q_t$ in the above equation and using integration by parts we conclude:
\small
\begin{equation*}
    \dot{\bar{x}}_t = -\frac{b_\alpha^2}{c_\alpha} (2r_t \bar{x}_t +q_t) +b_x\bar{x}_t.
\end{equation*}
\normalsize
\vskip-5mm
\end{proof}

Finally, we emphasize the uniqueness of such an equilibrium strategy.
\begin{theorem}
    There exists a unique MI-MFNE under small time condition.
\end{theorem}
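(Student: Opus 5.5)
By Theorem~\ref{the:FBPDE} and Proposition~\ref{prop:mi_FBODE}, an MI-MFNE corresponds to a solution of the FBODE system \eqref{FBODE:riccati}--\eqref{FBODE:xbar}. We therefore reduce existence and uniqueness of the equilibrium to well-posedness of that system on $[0,T]$. The plan has three steps: decouple the parts of the system that do not interact, reduce the genuinely coupled part to a fixed point for the mean flow, and close it with a contraction that holds when $T$ is small.

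\textbf{Step 1: the decoupled equations.} The Riccati equation \eqref{FBODE:riccati} does not involve $\bar x$ or $q$. Since $c_x>0$, $c_\alpha>0$ and $r_T=0$, it has a unique global solution on $[0,T]$ that is nonnegative and bounded, say $0\le r_t\le R$ with $R=R(T)$. This follows from the standard comparison argument for scalar Riccati equations with the right sign of the quadratic term. The equation \eqref{FBODE:s} for $s$ is then determined by quadrature once $(r,q,\bar x)$ are known, and $s$ does not enter the control \eqref{eq:equilibrium-ctrl-minors-only}. So the only genuinely coupled part is the linear two-point boundary value problem for $(q,\bar x)$:
\begin{equation*}
\dot q_t = \Big(\tfrac{2b_\alpha^2}{c_\alpha} r_t - b_x\Big) q_t - c_\mu(1-\lambda)\bar x_t,\quad q_T=0,\qquad
\dot{\bar x}_t = \Big(b_x-\tfrac{2b_\alpha^2}{c_\alpha}r_t\Big)\bar x_t - \tfrac{b_\alpha^2}{c_\alpha}q_t,\quad \bar x_0=\bar\mu_0 .
\end{equation*}

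\textbf{Step 2: fixed-point map.} Write $a_t := b_x - \tfrac{2b_\alpha^2}{c_\alpha} r_t$, which is bounded, and let $\Phi_t=\exp\!\big(\int_0^t a_s\,ds\big)$. Given a continuous flow $\bar x\in C([0,T])$, solve the backward equation explicitly:
\begin{equation*}
q_t = c_\mu(1-\lambda)\int_t^T \frac{\Phi_s}{\Phi_t}\,\bar x_s\,ds .
\end{equation*}
Then solve the forward equation with this $q$:
\begin{equation*}
\Gamma(\bar x)_t = \Phi_t\bar\mu_0 - \tfrac{b_\alpha^2}{c_\alpha}\int_0^t \frac{\Phi_t}{\Phi_u}\, q_u\,du .
\end{equation*}
A solution of the FBODE is exactly a fixed point of $\Gamma$. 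Since $\Gamma$ is affine, estimating the difference of two images gives
\begin{equation*}
\|\Gamma(\bar x)-\Gamma(\bar y)\|_\infty \le \tfrac{b_\alpha^2}{c_\alpha}\, c_\mu(1-\lambda)\, e^{2\|a\|_\infty T}\, T^2\, \|\bar x-\bar y\|_\infty .
\end{equation*}
For $T$ small enough the constant is below $1$, so the Banach fixed point theorem yields a unique $(q,\bar x)$, and then a unique $s$.

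\textbf{Step 3: back to the equilibrium.} With $(r,q,\bar x)$ in hand, the feedback $\hat\alpha(t,x)=-\frac{b_\alpha}{c_\alpha}(2r_tx+q_t)$ is affine in $x$ with bounded coefficients. The SDE \eqref{eq:dyn} is then a linear SDE with a unique strong solution and finite second moments, so $\hat\alpha\in\mathcal A$. Taking expectations in \eqref{eq:dyn} shows that the mean of $X^{\hat\alpha}$ solves \eqref{FBODE:xbar}, so the consistency condition of Definition~\ref{def:mf nash equilibrium} holds. This gives existence.

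\textbf{Main obstacle: uniqueness among all MI-MFNE.} The contraction only gives uniqueness of FBODE solutions, not of equilibria in general. To close this gap, take any MI-MFNE $(\hat\alpha,\hat m)$. By Theorem~\ref{the:FBPDE} its control is generated by some solution $(u,m)$ of the FBPDE \eqref{eq:MI_FBSDE}. With $\bar x_t$ fixed, the HJB in \eqref{eq:MI_FBSDE} is a linear-quadratic HJB with quadratic-in-$x$ data. We need to argue that its solution (in the class of quadratic-growth classical solutions) is unique and therefore equals the quadratic ansatz $u(t,x)=r_tx^2+q_tx+s_t$. Once that holds, the mean of any equilibrium is a fixed point of $\Gamma$, and uniqueness of the fixed point transfers to the equilibrium. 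An alternative that avoids PDE uniqueness is a direct convexity argument: since $c_\alpha,c_x>0$, for a fixed $\hat m$ the cost $J(\cdot;\hat m)$ is strictly convex in $\alpha$ (the interaction term is convex in the mean), so the best response is unique. This also reduces uniqueness to the fixed-point map on $\bar x$, which is a contraction for small $T$.
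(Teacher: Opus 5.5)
Your proof is correct and is essentially the paper's: solve the Riccati equation on its own, recover $s$ by quadrature, and close the linear $(q,\bar x)$ coupling with a Banach contraction whose constant is $O(T^2)$. The only difference is that the paper iterates on $q$ with Gr\"onwall estimates, while you iterate on $\bar x$ with explicit variation-of-constants formulas; your constant has the same form as the paper's $C_q$ and is dominated by it.

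Your final paragraph, on passing from uniqueness of FBODE solutions to uniqueness of equilibria, addresses a point the paper simply takes from Theorem~\ref{the:FBPDE} and Proposition~\ref{prop:mi_FBODE}. Your HJB route settles it: with $\bar x$ frozen, the HJB is the value equation of an ordinary LQ control problem, so its quadratic-growth classical solution is unique. The convexity route, by contrast, would also need a first-order condition to identify the best response with the FBODE feedback.
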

\begin{proof}
    Following Theorem~\ref{the:FBPDE} and Proposition~\ref{prop:mi_FBODE}, finding MI-MFNE is equivalent to solving the FBODE system in Proposition~\ref{prop:mi_FBODE}. We first emphasize that~\eqref{FBODE:riccati} is a scalar Riccati equation and it has the following explicit unique solution:
    \small
    \begin{equation}\label{eq:riccati-sol}
        r_t = \frac{\frac{c_x}{2} \big(e^{2{R}(T-t)}-1\big)}{({R}-b_x)e^{2{R}(T-t)}+(b_x+{R})},
    \end{equation}
    \normalsize
    where $R = \sqrt{b_x^2 + \tfrac{b_\alpha^2c_x}{c_\alpha}}$.
    Given $\br= (r_t)_{t\in[0,T]}$ process, the backward ODE for $\bq = (q_t)_{t\in[0,T]}$ and the forward ODE for $\bxbar = (\bar{x}_t)_{t\in[0,T]}$ are coupled.

Given any $\bq$, we can write another mapping $\bxbar = \phi_1 (\bq)$ and given any $\bxbar$ we can write a mapping $\bq^\prime = \phi_2(\bxbar)$. In order to show that the coupled system of $\bq$ and $\bxbar$ has a unique solution, we need to show that the composition mapping $\phi = \phi_2 \circ \phi_1: \bq \mapsto \bq^\prime$ is a contraction. 
We define the sup norm for $f\in C([0,T])$ as $||f||_T:= \sup_{t\in[0,T]} |f(t)|$, furthermore we define notation $\Delta g = g^1-g^2$ where $g\in\{q_t, \bar{x}_t, \dot{q}_t, \dot{\bar{x}}_t\}$. 

\noindent\textbf{Step 1:} We first fix $\bq^1, \bq^2$ and can write:
\small
\begin{equation*}
    \Delta \dot{\bar{x}}_t  = \big(-\frac{2b_\alpha^2r_t}{c_\alpha} +b_x\big) \Delta\bar{x}_t - \frac{b_\alpha^2}{c_\alpha} \Delta q_t, \quad \Delta \bar{x}_0=0.
\end{equation*}
\normalsize 


\normalsize Then we have:
\small
\begin{equation*}
\begin{aligned}
    | \Delta \bar{x}_t| &= \int_0^t  \Big|  \Big(\big(-\frac{2b_\alpha^2r_s}{c_\alpha} +b_x\big) \Delta\bar{x}_s - \frac{b_\alpha^2}{c_\alpha} \Delta q_s\Big)\Big|ds\\
    &\leq \int_0^t  \Big| \big(-\frac{2b_\alpha^2r_s}{c_\alpha} +b_x\big) \Big| |\Delta\bar{x}_s| + \frac{b_\alpha^2}{c_\alpha}  |\Delta q_s| ds
\end{aligned}
\end{equation*}
\normalsize
By using Grönwall's inequality we have:
\small
\begin{equation*}
\begin{aligned}
    | \Delta \bar{x}_t|&\leq \int_0^t \frac{b_\alpha^2}{c_\alpha} |\Delta q_s|ds \times \exp\left(\int_0^t  \Big| -\frac{2b_\alpha^2r_s}{c_\alpha} +b_x \Big| ds \right)\\
    &\leq C_{\bar{x}} \int_0^T|\Delta q_s|ds
\end{aligned}
\end{equation*}
\normalsize
where the constant $ C_{\bar{x}} := \frac{b_\alpha^2}{c_\alpha}\exp\big(T \big(\frac{2b_\alpha^2 |r|_T}{c_\alpha} +|b_x| \big)\big) $ with $|r|_T =\sup_{s\in[0,T]} |r_s|<c_xe^{2RT}/4R$ only depends on $b_\alpha, b_x, c_\alpha, c_x$ and $T$.

\noindent\textbf{Step 2:} Next, given $\bxbar^1, \bxbar^2$, we have:
\small
\begin{equation*}
    -\Delta \dot{q}^\prime_t  = \Big(-\frac{2b_\alpha^2}{c_\alpha} r_t +b_x \Big)\Delta q^\prime_t +c_\mu (1-\lambda) \Delta\bar{x}_t,\quad \Delta q^{\prime}_T=0
\end{equation*}
\normalsize

By plugging in the bound we found in \textbf{Step 1} and by using Grönwall's inequality, we have: 
\small
\begin{equation*}
\begin{aligned}
|\Delta q\revision{^{\prime}}_t| &= \int_t^T \Big|\big(\frac{2b_\alpha^2}{c_\alpha} r_s -b_x \big)\Delta q^\prime_s -c_\mu (1-\lambda) \Delta\bar{x}_s \Big|ds\\
&\leq \int_t^T  \Big|\big(\frac{2b_\alpha^2}{c_\alpha} r_s +|b_x| \big)\Big | |\Delta q^\prime_s| + c_\mu (1-\lambda)  |\Delta\bar{x}_s|ds\\
&\leq \int_t^T  \Big|\big(\frac{2b_\alpha^2}{c_\alpha} r_s +|b_x| \big)\Big | |\Delta q^\prime_s|ds + T c_\mu (1-\lambda) C_{\bar x} \int_0^T |\Delta q_s| ds \\
&\leq \exp\Big(T\big(\tfrac{2b_\alpha^2}{c_\alpha} |r|_T +|b_x| \big)\Big) T c_\mu (1-\lambda) C_{\bar x} \int_0^T |\Delta q_s| ds.
\end{aligned}
\end{equation*}
\normalsize
Then we have $||\Delta q^{\prime}||_T \leq C_{q} ||\Delta q||_T$ where 
$C_q:= \exp\Big(T\big(\tfrac{2b_\alpha^2}{c_\alpha} |r|_T +|b_x| \big)\Big) T^2 c_\mu (1-\lambda) C_{\bar x} $.
When $T$ is small enough, we have $C_q<1$ hence leads to the mapping $q\mapsto q^\prime$ being a contraction mapping. Then by Banach fixed point theorem, there exists a unique solution to the coupled $\bq$ and $\bxbar$ system given $\boldsymbol{r}$.
Finally, $s_t$ process is determined uniquely given the unique $(q_t, r_t, \bar{x}_t)$ processes.
\end{proof}

The contraction argument above establishes existence and uniqueness for a sufficiently small time horizon. Extensions to arbitrary finite horizons may be possible under additional structural conditions; related continuation methods for forward-backward systems can be found in~\cite[Section 4.1.2]{CarmonaDelarue_book_I}. Alternatively, existence alone may be approached through Schauder’s fixed-point theorem, provided that the required compactness and invariant-set conditions are verified.

\subsection{Inverse Learning for the Level of Altruism and the Labor Cost}\label{subsec:inverse-learning}
Next, we take the regulator's viewpoint and study the inference of altruism level via altruism parameter \(\lambda\) and the labor cost parameter \(c_\alpha\). The method is inspired by an inverse learning approach, where the inference procedure is given by the following identifiability result. 
\begin{theorem}\label{thm:inference}
    Suppose the regulator observes a linear equilibrium strategy \(a(t,x)=k(t)x+b(t)\) for \(t\in[0,T]\), where \(k,b:[0,T]\to\R\) satisfy
    \begin{enumerate}
        \item there exists \(\rho>b_x\) such that for any \(t\in[0,T)\),
        \small\[k(t)=\frac{b_x^2-\rho^2}{b_\alpha}\frac{e^{2\rho(T-t)}-1}{(\rho-b_x)e^{2\rho(T-t)}+b_x+\rho}<0;\]\normalsize
        \item there exists \( c_0\leq0\) such that for any \(t\in[0,T]\),
        \small
        \[\begin{cases}
        b(t)=b_\alpha c_\mu c_0\int_t^T\tilde x(s)\exp\left\{b_x(s-t)+b_\alpha\int_t^sk(u)du\right\}ds,\\[1mm]
        \tilde x(t)=\bar\mu_0\exp\left\{b_xt+b_\alpha\int_0^tk(s)ds\right\}\\[1mm]
        \hspace{50pt}+b_\alpha\int_0^tb(s)\exp\left\{b_x(t-s)+b_\alpha\int_s^tk(u)du\right\}ds.\end{cases}\]
        \normalsize
    \end{enumerate}
    If the representative agent \(i\) reveals her labor cost parameter \(c_\alpha>0\), then the level of altruism \(1-\lambda\) can be uniquely identified by using
    \small
    \begin{equation}
        \label{eq:lambda-1}
        \widehat{\lambda}=1+\frac{c_\alpha b(t)\exp\left\{b_xt+b_\alpha\int_0^tk(s)ds\right\}}{b_\alpha c_\mu\int_t^T\tilde x(s)\exp\left\{b_xs+b_\alpha\int_0^sk(u)du\right\}ds},
    \end{equation}
    \normalsize
    for any given \(t\in[0,T)\) such that the denominator in \eqref{eq:lambda-1} is nonzero. Otherwise, assume \(b_x,b_\alpha>0\), 
    then \((c_\alpha,\lambda)\) can be uniquely identified  as
    \small
    \begin{align}
        \widehat{c}_\alpha&=\frac{b_\alpha^2c_x}{\rho_0^2-b_x^2},\label{eq:c-alpha}\\
        \widehat{\lambda}&=1+\frac{\widehat{c}_\alpha b(t)\exp\left\{b_xt+b_\alpha\int_0^tk(s)ds\right\}}{b_\alpha c_\mu\int_t^T\tilde x(s)\exp\left\{b_xs+b_\alpha\int_0^sk(u)du\right\}ds},\label{eq:lambda-2}
    \end{align}
    \normalsize
    where \(\rho_0\) is the unique root on \((b_x,+\infty)\) for
    \small
    \begin{equation}
        \label{eq:implicit}
        f(\rho)=k(t_0)+\frac{\rho^2-b_x^2}{b_\alpha}\cdot\frac{e^{2\rho(T-t_0)}-1}{(\rho-b_x)e^{2\rho(T-t_0)}+\rho+b_x}=0,
    \end{equation}
    \normalsize
    for any fixed \(t_0\in[0,T)\).
\end{theorem}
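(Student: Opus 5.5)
The plan is to match the observed linear strategy $a(t,x)=k(t)x+b(t)$ term by term with the equilibrium feedback of Proposition~\ref{prop:mi_FBODE}, which is unique under the small time condition, and then invert the resulting relations for $c_\alpha$ and $\lambda$.

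\textbf{Step 1 (matching coefficients).} From \eqref{eq:equilibrium-ctrl-minors-only} we read off
\[
k(t)=-\tfrac{2b_\alpha}{c_\alpha}r_t,\qquad b(t)=-\tfrac{b_\alpha}{c_\alpha}q_t .
\]
I would substitute the explicit Riccati solution \eqref{eq:riccati-sol} into the first relation. Using $b_\alpha^2c_x/c_\alpha=R^2-b_x^2$, this gives
\[
k(t)=-\tfrac{R^2-b_x^2}{b_\alpha}\,\tfrac{e^{2R(T-t)}-1}{(R-b_x)e^{2R(T-t)}+b_x+R}.
\]
This is exactly condition 1 with $\rho=R$, and $R>|b_x|\ge b_x$.

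Next I would rewrite the coefficient $b_x-2b_\alpha^2r_t/c_\alpha$ as $b_x+b_\alpha k(t)$ and solve the two linear ODEs by variation of constants.
\begin{itemize}
\item The forward equation \eqref{FBODE:xbar} becomes $\dot{\bar x}_t=(b_x+b_\alpha k)\bar x_t+b_\alpha b(t)$. Its solution is the $\tilde x$ formula of condition 2, so $\tilde x=\bar x$.
\item The backward equation \eqref{FBODE:q} gives
\[
q_t=c_\mu(1-\lambda)\int_t^T\bar x_s\exp\Bigl\{b_x(s-t)+b_\alpha\int_t^sk\Bigr\}ds .
\]
\end{itemize}
Hence $b(t)$ has the form in condition 2 with $c_0=-(1-\lambda)/c_\alpha\le 0$.

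\textbf{Step 2 (identifying $\lambda$ when $c_\alpha$ is known).} The relation above reads
\[
b(t)=-\tfrac{b_\alpha c_\mu(1-\lambda)}{c_\alpha}\int_t^T\tilde x(s)\exp\Bigl\{b_x(s-t)+b_\alpha\int_t^sk\Bigr\}ds .
\]
I would multiply the numerator and the integral by $\exp\{b_xt+b_\alpha\int_0^tk\}$, so that the exponent in the integrand is measured from $0$. Solving for $\lambda$ then gives \eqref{eq:lambda-1} at any $t$ where the denominator is nonzero. The value is independent of $t$, because $1-\lambda$ is a constant factor.

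\textbf{Step 3 (identifying $c_\alpha$ when it is not revealed).} Here the key is that $\rho=R$ is recoverable from $k(t_0)$ at a single $t_0<T$. Then \eqref{eq:c-alpha} follows by inverting $R^2=b_x^2+b_\alpha^2c_x/c_\alpha$, and \eqref{eq:lambda-2} follows from Step 2 with $\widehat c_\alpha$ in place of $c_\alpha$.

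The main obstacle is showing that \eqref{eq:implicit} has a unique root on $(b_x,\infty)$. I would avoid differentiating the explicit quotient. Instead, set $\gamma=\rho^2-b_x^2$ and note that
\[
g_\gamma(t):=\tfrac{\gamma}{b_\alpha}\cdot\tfrac{e^{2\rho(T-t)}-1}{(\rho-b_x)e^{2\rho(T-t)}+\rho+b_x}
\]
equals $y_t/b_\alpha$, where $y$ solves the scalar Riccati equation
\[
-\dot y=-y^2+2b_xy+\gamma,\qquad y_T=0 .
\]
This identification is the computation already behind \eqref{eq:riccati-sol}, with $y=2b_\alpha^2r/c_\alpha$.

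The comparison principle for scalar ODEs then shows that $y_{t_0}$ is strictly increasing in $\gamma$ for $t_0<T$. To see this, take $\gamma_1<\gamma_2$; the difference $y^{(2)}-y^{(1)}$ satisfies a linear backward equation with positive forcing $\gamma_2-\gamma_1$, hence it is strictly positive before $T$. Since $b_x>0$, the map $\rho\mapsto\gamma$ is a strictly increasing bijection from $(b_x,\infty)$ onto $(0,\infty)$. Since $b_\alpha>0$, $f(\rho)=k(t_0)+y_{t_0}/b_\alpha$ is strictly increasing in $\rho$.

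For existence of the root, I would check the limits:
\begin{itemize}
\item as $\rho\downarrow b_x$, the numerator vanishes while the denominator tends to $2b_x>0$, so $f\to k(t_0)<0$;
\item as $\rho\to\infty$, the quotient behaves like $\rho+b_x\to\infty$, so $f\to+\infty$.
\end{itemize}
Continuity then gives exactly one root $\rho_0$, and $\rho_0=R$ by condition 1. The assumption $b_x,b_\alpha>0$ is used precisely in these sign and monotonicity steps. The remaining care is to note that the identified $\widehat\lambda$ lies in $[0,1]$, which follows from $c_0\le 0$ and the uniqueness of the equilibrium.
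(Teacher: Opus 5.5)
Your proof is correct. Steps 1--2 follow the paper's line, but in more detail. The paper only asserts that conditions 1)--2) are necessary for an MI-MFNE and reads $\lambda$ off the ODE $b'+(b_x+b_\alpha k)b-\tfrac{b_\alpha c_\mu}{c_\alpha}(1-\lambda)\bar x=0$. You verify necessity by variation of constants and obtain $\rho=R$ and $c_0=-(1-\lambda)/c_\alpha$ explicitly.

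The genuine difference is in proving that the root of \eqref{eq:implicit} is unique. The paper writes $f(\rho)-k(t_0)=f_1(\rho)f_2(\rho)$ with $f_1=(\rho+b_x)/b_\alpha$ and $f_2=1-2\rho/[(\rho-b_x)e^{2\rho(T-t_0)}+\rho+b_x]$. It computes $f_2'$ in closed form, uses $b_x>0$ to show it is positive, and concludes that the product of the two positive increasing factors is increasing. You instead identify $f(\rho)-k(t_0)$ with $y_{t_0}/b_\alpha$, where $y$ solves the normalized Riccati equation with source $\gamma=\rho^2-b_x^2$. Strict monotonicity in $\gamma$ then comes from the linear equation satisfied by the difference of two solutions.

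The paper's route is a self-contained calculus check. Yours avoids the derivative computation and explains why $|k(t_0)|$ grows with $\gamma=b_\alpha^2c_x/c_\alpha$. It would also survive without a closed-form Riccati solution, e.g.\ with time-dependent coefficients. Finally, it isolates the role of $b_x>0$: that assumption enters only through the reparametrization $\rho\mapsto\gamma$ and the endpoint limit. Parametrizing directly by $\gamma\in(0,\infty)$ would identify $c_\alpha$ for either sign of $b_x$.

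Two minor points. First, the appeal to small-time uniqueness in Step 1 is unnecessary. Linearity of the observed strategy together with Theorem~\ref{the:FBPDE} already forces $\partial_x u$ to be affine, hence $u$ quadratic in $x$, so the coefficient matching of Proposition~\ref{prop:mi_FBODE} applies for any $T$. Second, your closing claim about $[0,1]$ is only half justified. Condition 2) gives $\widehat\lambda=1+c_\alpha c_0$, so $c_0\le 0$ yields $\widehat\lambda\le 1$ but not $\widehat\lambda\ge 0$. The lower bound holds only because the data are generated by a true $\lambda\in[0,1]$. This is harmless, since the theorem makes no such claim.
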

\begin{proof}
    Under the observed linear policy \(a(t,x)=k(t)x+b(t)\), the state dynamics \eqref{eq:dyn} implies that 
    \small
    \[\EE [X^a_t]=\bar\mu_0+\int_0^t[b_x+b_\alpha k(s)]\EE [X^a_s]+b_\alpha b(s)ds,\quad t\in[0,T].\]
    \normalsize
    Then \(\EE[X^a_t]=\tilde x(t)\) for all \(t\in[0,T]\). Also, note that conditions 1) and 2) are necessary for strategy \(a\) to be MI-MFNE specified in Proposition~\ref{prop:mi_FBODE}. 
    
    Suppose \(c_\alpha>0\) is given. Then from \eqref{FBODE:q}, we know that
    \small
    \[b'(t)+[b_x+b_\alpha k(t)]b(t)-\frac{b_\alpha c_\mu}{c_\alpha}(1-\lambda)\bar x(t)=0,\quad t\in[0,T).\]
    \normalsize
    Condition 2) implies that \eqref{eq:lambda-1} is time-invariant thus the conclusion follows.

    Suppose \(c_\alpha>0\) is not known. We can write \[f(\rho)=k(t_0)+f_1(\rho)\cdot f_2(\rho),\] with
    \small
    \[f_1(\rho)=\frac{\rho+b_x}{b_\alpha},\ \ f_2(\rho;t_0)=1-\frac{2\rho}{(\rho-b_x)e^{2\rho(T-t_0)}+\rho+b_x}.\]
    \normalsize
    Notice that $f(b_x) = k(t_0)<0$, and $f(\rho)\to +\infty$ as $\rho\to  +\infty$.
    Then on \((b_x,+\infty)\) and for every \(t_0\in[0,T)\), \(f_1\) is strictly increasing, and
    \small
    \[f_2'(\rho)=2\frac{2\rho(T-t_0)e^{2\rho(T-t_0)}(\rho-b_x)+b_x(e^{2\rho(T-t_0)}-1)}{[(\rho-b_x)e^{2\rho(T-t_0)}+\rho+b_x]^2}>0.\]
    \normalsize
    Hence, \(f\) is strictly increasing on \((b_x,+\infty)\) for every $0\leq t_0<T$, and with condition 1), \(f=0\) admits a unique root \(\rho_0\). Following \eqref{eq:riccati-sol}, \eqref{eq:c-alpha} uniquely holds; \eqref{eq:lambda-2} together with its uniqueness follows from same arguments for \eqref{eq:lambda-1} with \(c_\alpha=\widehat{c}_\alpha\).
\end{proof}
\begin{remark}
\begin{enumerate}
    \item Conditions 1) and 2) in Theorem~\ref{thm:inference} ensure that the strategy in the linear feedback form is indeed an equilibrium strategy in the MI-MFG specified in Section~\ref{sec:model}. Any violation will lead to the issue of model misspecification. 
    \item Theorem~\ref{thm:inference} tells us that the unknown parameters \((c_\alpha,\lambda)\) can be uniquely determined if the equilibrium policy is given. That is, our inverse learning problem does not suffer from nonidentifiability issue which is common in inverse reinforcement learning; see for instance \cite{cao2021identifiability}. This is likely due to the structural assumptions we have made to the game setting. The identifiability guarantee for general inverse learning problems for mean-field game/control remains to be explored in detail.

\end{enumerate}
\end{remark}


\section{Learning Algorithm and Results}
\label{sec:numerics}

Theorem~\ref{thm:inference} characterizes the inverse learning result for the altruism level and the labor cost parameter if the linear feedback form of the MI-MFNE is explicitly known by the regulator. In practice, however, the equilibrium strategy is only available as sample paths of actions. Moreover, the regulator is often only able to observe agents' state and action trajectories in the discrete time. Lastly, the regulator can collect trajectories for only finite number of agents, though this number can be large. To address the gap between the theoretical and the practical settings, we now propose the corresponding learning approach.

\subsection{Algorithm Setting}\label{subsec:num-setting}
With pre-specified \((c_x,c_\mu)\in(0,+\infty)^2\), the regulator is observing an \(N\)-agent mixed-individual game in Nash equilibrium over the finite time horizon \([0,T]\), \(T>0\). The regulator records agents' states and actions every \(\Delta t>0\) time. Therefore, at each round of observation, the regulator collects a set of \(N\) trajectories \(\pmb\tau=\{\tau^{(i)}\}_{i=1}^N\), where \(\tau^{(i)}=\left\{(X_t^{(i)},a^{(i)}_t)\right\}_{t\in\mathbb T}\) with \(\mathbb T=\left\{T\wedge i\Delta t:i=0,\dots,N_T, \Delta t = \frac{T}{N_T}
\right\}\). From \(\pmb\tau\), the regulator can estimate the population mean process \(\tilde X=\{\tilde X_t\}_{t\in\mathbb T}\) where for any \(t\in\mathbb T\),
\small\[\tilde X_t=\frac{1}{N}\sum_{i=1}^NX^{(i)}_t.\]\normalsize

We assume that the trajectories follow a time-homogeneous Gaussian transition kernel, 
\small
\begin{equation}\label{eq:kernel}
X^{(i)}_{T\wedge(t+\Delta t)}|(X^{(i)}_t,a^{(i)}_t)\sim N\left((1+b_x\Delta t )X^{(i)}_t+b_\alpha{a}^{(i)}_t \Delta t,\sigma^2(\Delta t\wedge(T-t))\right),
\end{equation}
\normalsize
where parameters \(b_x,b_\alpha>0\) are explicitly known by the regulator; initial distribution for \(X^{(i)}_0\) as well as the volatility parameter \(\sigma\) remain unknown. 

\subsection{Inverse Learning Procedure}
\paragraph{Learning the linear feedback form of equilibrium strategy} Since the trajectories \(\pmb\tau\) are actually collected from the MI-MFNE, then there exists a true underlying linear form \([0,T]\times\R\xmapsto{a}k(t)x+b(t)\) such that for any \(i=1,\dots, N\) and any \(t\in\mathbb T\),
\small\[a^{(i)}_t=k(t)X^{(i)}_t+b(t).\]\normalsize
Moreover, conditions 1) and 2) in Theorem~\ref{thm:inference} are satisfied. Therefore, we directly learn \((k,b)\) from linear regression,
\small
\begin{equation}
    \label{eq:reg}
    \begin{aligned}
    &(k,b)={\rm Regression}(\pmb\tau)\\
    &\hspace{10pt}=\argmin_{(\kappa,\beta)=\{(\kappa_t,\beta_t)\}_{t\in\mathbb T}}\frac{1}{N}\sum_{i=1}^N\sum_{t\in\mathbb T}\left(\kappa_tX^{(i)}_t+\beta_t-a^{(i)}_t\right)^2.
    \end{aligned}
\end{equation}
\normalsize

\paragraph{Learning the altruism level $1-\lambda\in[0,1]$ when \(c_\alpha>0\) is given}
From Proposition~\ref{prop:mi_FBODE} and Theorem~\ref{thm:inference}, we know that 
\small
\[k_t=-\frac{2b_\alpha}{c_\alpha}r_t,\ \ b_t=-\frac{b_\alpha}{c_\alpha}q_t,\ \ t\in\mathbb T,\]
\normalsize
where the paths \(r\) and \(q\) satisfy \eqref{FBODE:riccati} and \eqref{FBODE:q}, respectively. When \(c_\alpha>0\) is given, we construct inference result of \(\lambda\) according to \eqref{eq:lambda-1}. Notice that, by linearity of expectation, \(\tilde X\) is an unbiased estimator for \(\tilde x\) specified in condition 2) of Theorem~\ref{thm:inference}. We also introduce the following approximations of integrals in discrete time,
\small
\begin{align}
E_t&=\exp\left\{b_xt+b_\alpha\Delta t\sum_{u\in\mathbb T,u<t}k_u\right\},\label{eq:approx-t-1}\\[1mm] \mathcal E_t&=\Delta t\sum_{u\in\mathbb T,u\in[t,T)}\tilde X_uE_u,\label{eq:approx-t-2}\end{align}
\normalsize
for any \(t\in\mathbb T\setminus\{T\}\). Fix any \(t\in\mathbb T\setminus\{T\}\), construct the inferred \(\lambda\) as follows,
\small
\begin{equation}
    \label{eq:lambda-3}
    \widehat\lambda=1+\frac{c_\alpha b_tE_t}{b_\alpha c_\mu\mathcal E_t};
\end{equation}
\normalsize
equivalently, the inferred altruism level \(1-\lambda\) is given by
\small
\[1-\widehat{\lambda}=-\frac{c_\alpha b_tE_t}{b_\alpha c_\mu\mathcal E_t}.\]\normalsize


\paragraph{Jointly learning the altruism level \(1-\lambda\in[0,1]\) and the labor cost parameter \(c_\alpha>0\)}
Now that \(c_\alpha>0\) is to be estimated as well, according to Theorem~\ref{thm:inference}, we construct the estimated results of \(c_\alpha\) and \(\lambda\) according to \eqref{eq:c-alpha} and \eqref{eq:lambda-2} sequentially. First, we fix any \(t_0\in\mathbb T\setminus\{0,T\}\). Following \eqref{eq:implicit}, define 
\small
\[f(\rho;t_0)=k_{t_0}+\frac{\rho^2-b_x^2}{b_\alpha}\cdot\frac{e^{2\rho(T-t_0)}-1}{(\rho-b_x)e^{2\rho(T-t_0)}+\rho+b_x}.\]
\normalsize
Then, find the root 
\(\hat R\in(b_x,+\infty)\) such that \(f(\hat R;t_0)=0\). By \eqref{eq:c-alpha}, construct the learning result for \(c_\alpha\) as
\begin{equation}
    \label{eq:c-alpha-2}
    \widehat{c}_\alpha=\frac{b_\alpha^2c_x}{\hat R^2-b_x^2}.
\end{equation}
Subsequently, plugging \(\widehat{c}_\alpha\) into by \eqref{eq:lambda-3}, construct the learning result for \(\lambda\) as
\small
\begin{equation}
    \label{eq:lambda-4}
    \widehat{\lambda}=1+\frac{\widehat{c}_\alpha b_t E_t}{b_\alpha c_\mu\mathcal E_t},
\end{equation}
\normalsize
for any \(t\in[0,T)\); equivalently, the altruism level \(1-\lambda\) is given by
\small
\[1-\widehat{\lambda}=-\frac{\widehat{c}_\alpha b_t E_t}{b_\alpha c_\mu\mathcal E_t}.\]\normalsize

\paragraph{Algorithm}
We present the above learning procedure as in the following Algorithm~\ref{algo:SGD-SMFG}. To reduce the variance related to population mean estimate, we let the regulator conduct \(M\) rounds of observations over this \(N\)-agent system. The output is then the average over \(M\) observations (i.e., samples).

{\begin{algorithm}[!ht]
\caption{\small Estimation of altruism level and  labor cost}\label{algo:SGD-SMFG}
\begin{algorithmic}[1]
\small
\STATE \textbf{Setting:} number of total observations $M$ for $N$-agent system; time horizon \(T>0\), time steps $N_T\in\mathbb N^+$, discretization precision $\Delta t = \frac{T}{N_T}$, time index \(\mathbb T=[0,\Delta t,2\Delta t,\dots,N_T\Delta t]\), time point used in learning \(t_0\in\mathbb T\setminus\{0,T\}\); boolean variable \(I=\mathds{1}\{c_\alpha\text{ is given}\}\)
\vskip 1mm
\STATE \textbf{Initialization:}  null lists \(C_\alpha\) and \(\Lambda\); \(m=0\); 


\vskip1mm 
\WHILE{$m < M$}\vskip1mm
    \STATE{\(E=\text{all }0\text{ list of length }t_0\), \(\mathcal E=0\);}
    \STATE{Sample trajectories \(\pmb\tau\) for the \(N\) agents under MI-MFNE}\vskip1mm
    \STATE{\((k,b)={\rm Regression}(\pmb\tau)\)}\vskip1mm
    \STATE{\(\tilde X=\frac{1}{N}\sum_{i=1}^NX^{(i)}\)}\vskip2mm
    \FOR{\(t\) is in \([t_0,\dots,(N_T-1)\Delta t]\)}\vskip1mm
        \STATE{\(E\).append\(\left(E_t\right)\) according to \eqref{eq:approx-t-1}}
    \ENDFOR\vskip1mm
    \STATE{\(\mathcal E=\mathcal E_{t_0}\) according to \eqref{eq:approx-t-2} with \(E_u=E[u]\) for \(u\in[t_0,\dots,(N_T-1)\Delta t]\)}
    \vskip2mm
    \IF{\(I=1\)}
        \STATE{\(C_\alpha\).append\(\left(c_\alpha\right)\), \(\Lambda\).append\(\left(1+\frac{c_\alpha b_{t_0}E[t_0]}{b_\alpha c_\mu \mathcal E}\right)\)}
    \ELSE
        \STATE{Solve \(f(\rho;t_0)=0\) for \(\hat R\) on \((b_x,+\infty)\)}
        \STATE{\(\widehat{c}_\alpha=\frac{b_\alpha^2c_x}{\hat R^2-b_x^2}\)}
        \STATE{\(C_\alpha\).append\(\left(\widehat{c}_\alpha\right)\), \(\Lambda\).append\(\left(1+\frac{\widehat{c}_\alpha b_{t_0}E[t_0]}{b_\alpha c_\mu \mathcal E}\right)\)}
    \ENDIF
    \STATE{$m++$};
\ENDWHILE
\RETURN \(\texttt{mean}(C_\alpha)\), \(\texttt{mean}(1-\Lambda)\), 
\(\texttt{std}(C_\alpha)\), 
\(\texttt{std}(\Lambda)\).
\end{algorithmic}
\end{algorithm}}

\subsection{Experimental Results}
We test Algorithm~\ref{algo:SGD-SMFG} with the following parameter settings:
\small
$$
T = 1, \ b_\alpha = 1,\ b_x = 0.5,\ c_\mu = 1,\ c_x = 1. 
$$
\normalsize
We generate our observations of $\{\alpha^{(i)}, X^{(i)}\}_{t\in\mathbb{T}}$ through~\eqref{eq:kernel}, in which the initial distribution $\xi$ of each agent follows an i.i.d normal distribution with mean $\bar\mu_0 = 0.5$ and variance the volatility $\sigma_0^2 = 0.01$ and where $\sigma = 0.1$. 
With the ground truth for the altruism level and labor costs being
\small
$$
1-\lambda = 0.1,\qquad c_\alpha = 1,  
$$
\normalsize
we obtain simulated $100$-agent system action-state pairs using \(N_T=1000\) time steps so that the discretization precision is $\Delta t = 0.001$. We compare learned altruism levels and labor costs from the proposed algorithm to the ground truth.

\paragraph{Inference for altruism level}
When the labor cost $c_\alpha=1$ is known, we choose $t_0 = T/2$ to apply Algorithm~\ref{algo:SGD-SMFG}. In a $100$-agent model, we can obtain the following estimation for the altruism level $1-\lambda$ as the number of observations $M$ of the system increases from 1 to $50$ as illustrated in Figure~\ref{fig:only altruism level}.
The estimated interval is constructed by
{\small
$$
[\texttt{mean}(1-\Lambda) - 2.58 \texttt{std}(\Lambda), \texttt{mean}(1-\Lambda) + 2.58 \texttt{std}(\Lambda)].
$$
}
Here, 2.58 corresponding to the 99.5\% upper quantile of standard normal distribution. 
\begin{figure}
    \centering
    \includegraphics[width=0.47\textwidth]{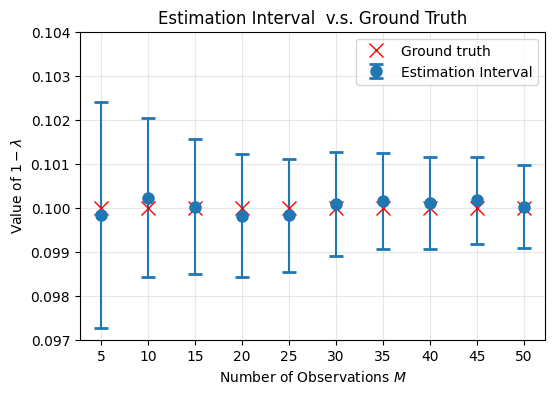}
    \caption{Inverse learning for altruism level $1-\lambda$ when $c_\alpha$ is known.}
    \label{fig:only altruism level}
\end{figure}
We can clearly see that, the overall estimation is very accurate, and the estimation interval becomes smaller  when the number of observations increases, as expected.

\paragraph{Inference for labor cost and altruism level}
When the labor cost $c_\alpha$ is unknown, we fix $t_0=T/2$ to apply Algorithm~\ref{algo:SGD-SMFG}. In a $100$-agent model,  the estimation error for $c_\alpha$ via Algorithm~\ref{algo:SGD-SMFG}  is $|c_\alpha - \widehat{c}_\alpha| \approx 2.2\times 10^{-10}$, with a standard deviation $O(10^{-16})$. Thus we can claim the success of recovering the labor costs. Figure~\ref{fig:altruism level} shows the estimation of the altruism level $1-\lambda$ with increasing number of observations, where the estimation interval is constructed in the same manner as before. As the number of observations $M$ of the system increases from 1 to $50$, the estimation gets more and more accurate as in the previous case, with only slightly larger variance (where the difference of variance is of order $10^{-4}$). 

\begin{figure}
    \centering
    \includegraphics[width=0.47\textwidth]{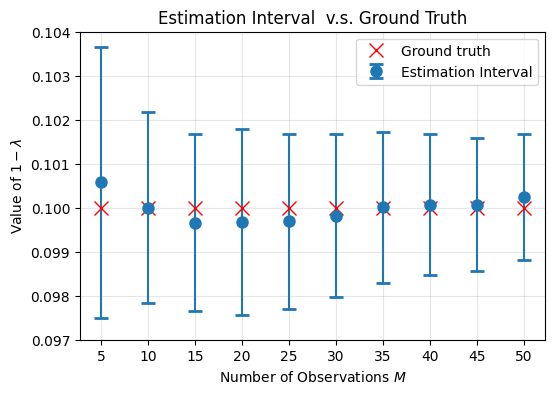}
    \caption{Inverse learning for  altruism level $1-\lambda$ when $c_\alpha$ is unknown.}
    \label{fig:altruism level}
\end{figure}

\section{Conclusion and Future Work}
\label{sec:conclusion}

In this paper, we study inverse learning in MI-MFGs, with the goal of identifying both labor cost parameters and the altruism levels in a data-driven way from observed equilibrium behavior. We introduced and proved forward-backward PDE-based characterization for the MI-MFNE and established its existence and uniqueness. Furthermore, we analyzed the inverse learning approach and established identifiability results. Our results constitute the first inverse learning framework for MI-MFGs and offer theoretical guarantees for altruism and cost parameter recovery. We further provided learning approach and our experiments illustrate the effectiveness of the proposed approach. Our findings contribute to the understanding of the agent behavior in large systems in which both cooperative and non-cooperative tendencies can occur.

Several directions remain open for future research. In the present model, the relationship between the cost structure and the equilibrium strategy is established through a variational approach. A natural next step is to develop an MI-MFG formulation that admits a dynamic programming principle, thereby enabling inverse \emph{reinforcement} learning methods for recovering altruism levels from more general forms of observed behavior. Such a formulation would require equilibrium feedback strategies to be defined on an augmented state-and-distribution space, reflecting the dependence of the agents' decisions on both their individual states and the population distribution. Another important direction is to formulate a genuine Stackelberg game between a regulator and a population of mixed-individual agents. This would allow the regulator not only to infer latent preferences, but also to use the inferred model to design \textit{optimal} incentives and anticipate the resulting equilibrium response.

We also plan to extend the proposed learning methodology to settings in which the regulator has access only to state trajectories, only to control trajectories, or to aggregate population-level observations. Further extensions may incorporate multidimensional state dynamics, nonlinear cost structures, heterogeneous populations, and imperfect or partially observed equilibrium behavior. In these more general settings, establishing identifiability will remain a fundamental challenge, since different combinations of preferences and structural parameters may generate observationally similar behavior. Developing finite-sample guarantees, uncertainty quantification, and computationally efficient learning algorithms will therefore be essential for translating inverse MI-MFG methods into reliable tools for empirical behavioral modeling and data-driven policy design.

\bibliographystyle{plain}
\bibliography{ref}

\end{document}